\numberwithin{equation}{section}
\newcommand{\e}{\varepsilon}
\renewcommand{\d}{\mathrm{d}}
\newcommand{\R}{\mathbb R}
\newcommand{\N}{\mathbb N}
\newcommand{\dv}{{\rm{div}}}
\newtheorem{thm}{Theorem}[section]
\newtheorem{lem}[thm]{Lemma}
\newtheorem{rmk}[thm]{Remark}
\numberwithin{equation}{section}
\newcommand{\Hm}[1]{\leavevmode{\marginpar{\tiny%
$\hbox to 0mm{\hspace*{-0.5mm}$\leftarrow$\hss}%
\vcenter{\vrule depth 0.1mm height 0.1mm width \the\marginparwidth}%
\hbox to 0mm{\hss$\rightarrow$\hspace*{-0.5mm}}$\\\relax\raggedright
#1}}}
\title[\tiny Approximation in an optimal design problem governed by the heat equation]{Approximation in
an optimal design problem\\ governed by the heat equation}
\author{Kei Matsushima}
\address[Kei Matsushima]{
Informatics and Data Science Program, 
Graduate School of Advanced Science and Engineering, 
Hiroshima University, 
Hiroshima 739-8521, 
JAPAN}
\email{matsushima@acs.hiroshima-u.ac.jp}
\author{Tomoyuki Oka}
\address[Tomoyuki Oka]{
Department of Intelligent Mechanical Engineering, Fukuoka Institute of Technology, Fukuoka 811-0295, JAPAN}
\email{t-oka@fit.ac.jp}
\date{\today}
\keywords{Optimal design problem, Topology optimization, Heat equation, Long-time behavior, Optimality condition, Level set method.
}
\thanks{K.M. is partially supported by JSPS KAKENHI Grant Number 24K17191 and 23H03798.
T.O. is partially supported by JSPS KAKENHI Grant Number JP23K12997.
}
\begin{document}

\subjclass[2020]{\emph{Primary}: 49Q10; \emph{Secondary}: 35K20, 49K20, 49J45, 65M60} 



\begin{abstract}
This paper studies a two-material optimal design problem for the time-averaged duality pairing between a (possibly time-dependent) heat source and the weak solution of an initial-boundary value problem for the heat equation with a two-material diffusion coefficient, under a volume constraint. In general, such optimal designs are not guaranteed to exist, and geometric constraints such as the perimeter are required. As an approximation of the problem with an additional perimeter constraint, a material representation based on a level set function, together with a perturbation of the Dirichlet energy, is employed. It is then shown that optimal level set functions exist for the perturbation problem, and the corresponding minimum value converges to that of the elliptic case, thereby elucidating the long-time behavior. Furthermore, two-material domains satisfying this property are also
constructed via the nonlinear diffusion-based level set method. In particular, the asymptotic behavior with respect to the perturbation parameter is clarified, and the validity of the approximation is established. 
\end{abstract}

\maketitle

\section{Introduction} 
Let $\Omega$ be a bounded domain of $\R^N$ with Lipschitz boundary $\partial\Omega$ for $N\ge 1$ and let $\Omega_i\subset\Omega$ ($i=0,1$) be such that $\Omega_0\cup\Omega_1=\Omega$ and $\Omega_0\cap\Omega_1=\emptyset$.
Assume that $\beta>\alpha>0$. 
The sets $\Omega_0$ and $\Omega_1$ are regarded as the regions occupied by the materials with diffusion coefficients $\alpha$ and $\beta$, respectively.
Thus the two-material diffusion coefficient matrix is described as $\kappa=\kappa[\chi]=(\alpha+(\beta-\alpha)\chi)\mathbb{I}$, where  $\mathbb{I}\in \R^{N\times N}$ is the identity matrix, and $\chi=\chi_{\Omega_1}\in L^{\infty}(\Omega;\{0,1\})$ is the characteristic function with respect to $\Omega_1$, which satisfies the following volume constraint\/{\rm :}
\begin{align*}
    \int_\Omega \chi(x)\, \d x=\gamma |\Omega|\quad \text{ for some } \gamma\in(0,1),
\end{align*}
and therefore, $\Omega_i\subset \Omega$ is represented by
$$
\Omega_i=\{x\in \Omega\colon \chi(x)=i\}.
$$
This paper focuses  on the optimal design of a two-material configuration that minimizes the following energy\/{\rm :}
\begin{align*}
    \mathcal{E}_T(\chi):=\frac{1}{T}\int_0^T\langle f(t), u(t)\rangle_{H^1_0(\Omega)}\, \d t
    \quad \text{ for } T>0,
\end{align*}
where $f\in L^2(0,T;H^{-1}(\Omega))$  
and $u=u_\chi\in V:=L^2(0,T;H^1_0(\Omega))\cap W^{1,2}(0,T;H^{-1}(\Omega))$ is a weak solution to the following heat equation\/{\rm :}
\begin{align}
\partial_t u-\dv(\kappa\nabla u)&=f\quad &&\text{ in } \Omega\times (0,T), \label{eq:1}\\
u&=0 &&\text{ on } \partial \Omega\times (0,T),\label{eq:2}\\
u&=u_0 &&\text{ in } \Omega\times \{0\}.
\label{eq:3}
\end{align}
Here $u_0$ stands for the initial data and lies on $L^2(\Omega)$ for simplicity
(see,~e.g.,~\cite[\S 11.1]{CD99} for the well-posedness).
Then the problem mentioned above is formulated as follows\/{\rm :}
\begin{align*}
\tag{CDP}
\qquad 
    \inf_{\chi\in \mathcal{CD}} \mathcal{E}_T(\chi),
\end{align*}
where $\mathcal{CD}=\{\chi\in L^{\infty}(\Omega;\{0,1\})\colon \|\chi\|_{L^1(\Omega)}=\gamma|\Omega|\}$.

\subsection{Known results}
As for the existence of solutions for (CDP), 
\emph{homogenization theory}, especially \emph{$H$-convergence theory} \cite{MT,BFF92,A02}, plays a crucial role, and it is well known that the classical design problem (CDP) is ill-posed since the minimizing sequence $(\chi_n)$ is bounded in $L^{\infty}(\Omega)$ but the density $\theta$, which is a weak-$\ast$ limit depending on the subsequence of $(n)$, is not in the classical design domain $\mathcal{CD}$ in general.
In the case of time-independent heat source $f=f(x)$, the dissipative energy (or the Dirichlet energy) is treated in \cite{MPP08}, and it is proved that the relaxed design problem, which is equivalent to the classical design problem, admits optimal densities and homogenized matrices.
The subsequent work \cite{AMP10} investigates the long-time behavior of the relaxed design problem (see also \cite{TZZ18}), and constructs optimized densities that attain the minimum value for the elliptic case.
Moreover, it is shown that the optimal designs are achieved by \emph{microstructures} in the form of sequential laminates of rank at most $N$, and that the order of lamination is reduced to at most $N - 1$ when $T>0$ is sufficiently large.

On the other hand, it remains a challenge to explicitly construct such fine-scale microstructures. In practical design, domains (or shapes) are usually chosen to avoid the intermediate set $[0<\theta<1]:=\{x\in \Omega:\,0<\theta(x)<1\}$, rather than relying on such microstructures. Typical approaches include \emph{SIMP methods}~\cite{BS03} and \emph{level set methods}~\cite{AJT04,HKO07}, which are well known. 
Indeed, several results have been reported in the engineering literature, such as \cite{zhuang2013topology,zhuang2014global,wu2019topology}. 
However, to the best of our knowledge, only a few results address the optimality of the obtained domains and their relation to the classical design problem. Indeed, even in the steady-state case (i.e.,~the elliptic case), establishing global optimality with respect to the domain is still a major open problem.

In the elliptic case, the energy functional $\mathcal{E}_T$ coincides with the dissipative energy through the weak form. 
Since the optimal homogenized matrix can be characterized as the rank-1 simple laminate, and the adjoint field appearing in the sensitivity (or the Fr\'echet derivative) is the same as the state itself, the relaxed design problem reduces to replacing the characteristic function by the density.
Furthermore, by the dual energy method (see \cite[\S 2.4]{ACMOY19}), the relaxed design problem 
admits only global solutions due to convexity. 
As a consequence, the optimal density can be constructed by employing the projected gradient descent method (see \cite[\S 5.2.3]{ACMOY19}).
Moreover, the optimal density is characterized in \cite{A02,CM70}, and in particular, it lies on $H^1(\Omega)$, e.g.,~when $\Omega$ is a ball and $f \in L^2(\Omega)$ is radial (see \cite{CD15}).

As in such a specific case, the additional regularity assumption is reasonable
to eliminate the intermediate set. Indeed, additional geometric assumptions are required to ensure the existence of solutions for (CDP) even in the elliptic case. In \cite{AB93}, the existence theorem has been proved by adding the perimeter constraint. 
From a functional analytic point of view, the compact embedding $BV(\Omega)\Subset  L^1(\Omega)$ plays a crucial role in characterizing the weak-$\ast$ limit of $(\chi_n)$; in particular, the limit remains a characteristic function.   
Therefore $L^\infty(\Omega)\cap BV(\Omega)$ provides a theoretically suitable framework.
Furthermore, the Modica--Mortola functional is known as a \emph{phase-field approximation} of the $BV$ framework, and it $\Gamma$-converges to the perimeter functional, thereby yielding the sharp interface limit (see e.g.,~\cite{M87,S88,D93,B02}). Hence, as in the smoothness results in  \cite{CD15}, the adoption of $L^\infty(\Omega)\cap H^1(\Omega)$ 
yields a numerically convenient and theoretically consistent framework for approximation.

Based on the results mentioned above, the so-called \emph{reaction-diffusion-based level set method} has been developed in \cite{T10}, 
which has the advantage that it allows for topological changes while decreasing the energy, independently of the topology of the initial domain. Moreover, since one only needs to solve the time-discrete equation in the fixed domain $\Omega$, no additional numerical techniques such as remeshing are required, which makes the implementation comparatively simple. 
However, in heat diffusion problems such as (CDP), the interface $\partial\Omega_0 \cap \partial\Omega_1$ often exhibits numerous spikes, and such spiky features cause it to oscillate during the optimization process, leading to convergence difficulties. In \cite{OY23},
this issue was addressed by extending the so-called \emph{nonlinear diffusion-based level set method} through the use of degenerate diffusion coefficients, and a numerically tractable scheme was also developed for heat diffusion problems.

In the previous work \cite{O24}, by replacing the characteristic function $\chi$ with the positive part of the level set function $\phi_+$ (defined in the next subsection), the problem setting proposed in \cite{OY23} was slightly modified in terms of the sensitivity, and existence of optimal level set functions was established. It is noteworthy that the energy associated with the optimal level set function decreases monotonically with respect to the perturbation parameter ($\e>0$) and converges to the minimum value under the smoothness assumption as in \cite{CD15}. Furthermore, by employing the method devised in \cite{O24}, domains that simultaneously satisfy both of these optimality conditions and exhibit no intermediate sets are obtained.
More generally, nonlinear boundary conditions described by the maximal monotone operator were also treated in \cite{KMO24}.

Consequently, extending the framework of \cite{O24} to the nonstationary setting will provide a basis for constructing two-material domains, on which an analysis of optimality can be carried out. 

This paper therefore contributes to the multiscale modeling community by bridging mathematical analysis, phase-field approximations and level set methods in the context of time-dependent heat diffusion design problems.
In particular, to the best of our knowledge, previous works do not address time-dependent heat sources, which serves as a motivation for the problem introduced in the next subsection.

\subsection{Main results}

This paper aims to construct an approximate solution to (CDP) with almost no intermediate sets by restricting the class to $H^1(\Omega)$.  
To this end, we shall introduce a material representation via the level set function given as
\begin{align*}
\phi(x)
\begin{cases}
>0,\quad& x\in \Omega_1,\\
=0,\quad& x\in \partial\Omega_1\cap \partial\Omega_0,\\
<0,\quad&x\in \Omega_0.
\end{cases}
\end{align*}
By employing the level set function, the two-material diffusion coefficient matrix $\kappa[\chi]$ is formally replaced with $\kappa[\phi_+^m]$ for $m\ge 1$ under $\|\phi_+\|_{L^{\infty}(\Omega)}\le 1$. 
Here $\phi_+$ denotes the positive part of $\phi$, i.e.,~$\phi_+=\max(\phi,0)$.
In order to eventually approximate $\kappa[\chi]$ by $\kappa[\phi_+^m]$ in the optimization procedure,
we shall also introduce the Dirichlet energy of the level set function as a perturbation term of the minimization problem (see $(P)$ below). 
Therefore the time-averaged energy $\mathcal{E}_T(\chi)$ will be approximately treated as
\begin{align*}
J_\e^T(\phi)
=
\frac{1}{T}\int_0^T \langle f(t), u_\phi(t)\rangle_{H^1_0(\Omega)}\, \d t
    +
    \frac{\e}{2}\int_\Omega |\nabla \phi(x)|^2\, \d x
=:\mathcal{E}_T(\phi_+^m)+\mathcal{I}(\phi),
\end{align*}
where $\varepsilon>0$ and $u_\phi\in V$ is a unique weak solution to \eqref{eq:1}--\eqref{eq:3} with $\kappa=\kappa[\phi_+^m]$, and then
our approximate problem is formulated as follows\/{\rm :}  
\begin{align*}
\tag{$P$}
\inf_{\phi\in U_{\rm ad}}
    J_\e^T(\phi),
\end{align*}
where 
$U_{\rm ad}=\{\phi\in L^{\infty}(\Omega)\cap H^1(\Omega)\colon  
\|\phi_+\|_{L^1(\Omega)}=\gamma |\Omega|,\  
|\phi|\le 1\}$.

Now, our theoretical result reads,
\begin{thm}\label{T:main}
Under the same assumptions mentioned above for $\Omega$, $f$ and $u_0$, 
let $u_\phi\in V$ be a weak solution to \eqref{eq:1}--\eqref{eq:3} with $\kappa=\kappa[\phi_+^m]$ for $m\ge 1$.
Then the following {\rm (i)--(iii)} are all satisfied\/{\rm :}
\begin{itemize}
\item[\rm (i)] {\rm(}Existence of solutions{\rm ).} For every $T>0$ and $\e>0$, there exists at least one solution of $(P)$.
\item[\rm (ii)] {\rm(}Long-time behavior{\rm ).}
For every $T>0$, let $\phi_T^\ast\in U_{\rm ad}$ be a solution to $(P)$.
Suppose that $f\in L^2_{\rm loc}([0,+\infty);H^{-1}(\Omega))$ satisfies   
\begin{align}\label{eq:f-assump}
\lim_{T\to+\infty}\frac{1}{T}\int_0^T\|f(t)-f_\infty\|_{H^{-1}(\Omega)}^2\, \d t=0
\end{align}
for some $f_\infty\in L^2(\Omega)$.
Then there exists a {\rm(}not relabeled{\rm)} subsequence of $(T)$ and
$\phi_\infty^\ast\in U_{\rm  ad}$ such that 
$$
\phi_T^\ast \to \phi_\infty^\ast \quad 
\text{ weakly-$\ast$ in } L^{\infty}(\Omega)
 \text{ and } 
\text{ weakly in } H^1(\Omega),
$$
and moreover, $\phi_\infty^\ast\in U_{\rm  ad}$ is a solution to the following problem for the elliptic case\/{\rm :}
\begin{align*}
\tag{$P_\infty$} 
    \min_{\phi\in U_{\rm ad}}\left\{J_\e^\infty(\phi):=\mathcal{E}_\infty(\phi_+^m)+\mathcal{I}(\phi)=
    \int_\Omega f_\infty(x)\overline{u}_\phi(x)\,\d x+\frac{\e}{2}\int_\Omega|\nabla\phi(x)|^2\, \d x
    \right\},
\end{align*}
where $\overline{u}_\phi\in H^1_0(\Omega)$ is a unique weak solution to 
\begin{align}\label{eq:elliptic}
    -\dv[\kappa\nabla \overline{u}_\phi]=f_\infty \quad \text{ in } H^{-1}(\Omega)
\end{align}
and $\kappa=\kappa[(\phi_\infty^\ast)_+^m]$.
In particular, $J_\e^S(\phi_S^\ast)\le J_\e^T(\phi_T^\ast)$ for $S\ge T$ under the assumption that $f$ and $u_0$ are some constants {\rm(}see \eqref{eq:f-u0-assump} below{\rm )}.
\item[\rm (iii)] {\rm(}Asymptotic behavior{\rm ).}
For every $\e>0$, let $\phi_\e^\ast\in U_{\rm ad}$ be a solution to $(P)$  and 
let $m=1$. Then there exists a {\rm(}not relabeled{\rm)} subsequence of $(\e)$ and $\phi^\ast_0\in U_{\rm ad}$ such that
$$
\phi^\ast_\e\to \phi^\ast_0\quad \text{ weakly-$\ast$ in } L^{\infty}(\Omega)
 \text{ and  weakly in } H^1(\Omega).
$$
Moreover, under the assumptions that 
$f$ and $u_0$ are constants such that $fu_0\ge 0$, 
it holds that
$$
    \mathcal{E}_T((\phi^\ast_0)_+)=\inf_{\chi\in \mathcal{CD}} \mathcal{E}_T(\chi),
$$
provided that $\theta^\ast\in H^1(\Omega)$. Here $\theta^\ast$ is the optimal density characterized by 
$
    \theta^\ast\in \text{\rm argmin}_{\theta\in\Theta}\, \mathcal{E}_T(\theta),
$
where
$\Theta=\{\theta\in L^{\infty}(\Omega;[0,1]) \colon \|\theta\|_{L^{1}(\Omega)}=\gamma|\Omega|\}$.
\end{itemize}
\end{thm}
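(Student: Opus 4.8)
The overall plan is the direct method of the calculus of variations, resting on two \emph{coefficient-to-state stability} facts that I would establish first: if $0\le\zeta_n,\zeta\le1$ and $\zeta_n\to\zeta$ a.e.\ in $\Omega$, then the parabolic solutions $u_{\zeta_n}$ of \eqref{eq:1}--\eqref{eq:3} with $\kappa=\kappa[\zeta_n]$ converge to $u_\zeta$ weakly in $L^2(0,T;H^1_0(\Omega))$ and strongly in $L^2(\Omega\times(0,T))$, and the elliptic solutions of \eqref{eq:elliptic} converge to $\overline{u}_\zeta$ weakly in $H^1_0(\Omega)$; in particular $\mathcal E_T(\zeta_n)\to\mathcal E_T(\zeta)$ and $\mathcal E_\infty(\zeta_n)\to\mathcal E_\infty(\zeta)$. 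Both come from the basic energy estimate (test the equation with the solution, use $\kappa\ge\alpha\mathbb I$), the Aubin--Lions lemma in the parabolic case, and a passage to the limit in the weak formulation in which $\kappa[\zeta_n]\nabla w\to\kappa[\zeta]\nabla w$ strongly in $L^2$ by dominated convergence, together with uniqueness of the limit problem; they are applied below to $\zeta=\phi_+^m$. With these in hand, part \textbf{(i)} is routine: the energy estimate gives $\int_0^T\langle f,u_\phi\rangle\,\d t\ge-\tfrac12\|u_0\|_{L^2(\Omega)}^2$, so $J_\e^T$ is bounded below on $U_{\rm ad}$; a minimizing sequence $(\phi_n)\subset U_{\rm ad}$ is bounded in $H^1(\Omega)$ because $|\phi_n|\le1$ and $\sup_n\mathcal I(\phi_n)<\infty$; I then extract $\phi_n\rightharpoonup\phi^\ast$ weakly in $H^1(\Omega)$, weakly-$\ast$ in $L^\infty(\Omega)$, strongly in $L^2(\Omega)$ and a.e., note that the constraints pass to the limit (using that $t\mapsto t_+$ is $1$-Lipschitz, so $(\phi_n)_+\to(\phi^\ast)_+$ in $L^1(\Omega)$), and conclude from weak lower semicontinuity of $\mathcal I$ and $\mathcal E_T((\phi_n)_+^m)\to\mathcal E_T((\phi^\ast)_+^m)$ that $J_\e^T(\phi^\ast)\le\liminf_nJ_\e^T(\phi_n)$.

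For \textbf{(ii)} the key ingredient is a long-time estimate that is \emph{uniform in the coefficient}. Writing $u_\zeta=\overline{u}_\zeta+v$ with $\overline{u}_\zeta$ the solution of \eqref{eq:elliptic}, one has $\partial_tv-\dv(\kappa[\zeta]\nabla v)=f-f_\infty$ and $v(0)=u_0-\overline{u}_\zeta$, so the energy estimate yields $\int_0^T\|\nabla v\|_{L^2(\Omega)}^2\,\d t\le C\bigl(\|u_0\|_{L^2}^2+\|f_\infty\|_{H^{-1}}^2\bigr)+C\int_0^T\|f-f_\infty\|_{H^{-1}(\Omega)}^2\,\d t$ with $C=C(\alpha,\Omega)$, and from the splitting
\[
\frac1T\int_0^T\langle f(t),u_\zeta(t)\rangle\,\d t=\Bigl\langle\frac1T\int_0^Tf(t)\,\d t,\ \overline{u}_\zeta\Bigr\rangle+\frac1T\int_0^T\langle f(t),v(t)\rangle\,\d t
\]
assumption \eqref{eq:f-assump} and Cauchy--Schwarz show that the first term tends to $\int_\Omega f_\infty\overline{u}_\zeta$ and the second to $0$ as $T\to+\infty$, uniformly over $0\le\zeta\le1$; that is, $\mathcal E_T(\zeta)\to\mathcal E_\infty(\zeta)$ uniformly in $\zeta$. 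Taking any fixed $\psi\in U_{\rm ad}$ as a competitor in $(P)$ then bounds $J_\e^T(\phi_T^\ast)$ uniformly in (large) $T$, hence --- since $\mathcal E_T\ge-\tfrac1{2T}\|u_0\|^2$ --- bounds $\mathcal I(\phi_T^\ast)$ and so $\|\phi_T^\ast\|_{H^1(\Omega)}$ uniformly, and I extract $\phi_{T_k}^\ast\rightharpoonup\phi_\infty^\ast$ as in (i). Applying the long-time estimate with $\zeta=(\phi_{T_k}^\ast)_+^m$ and the elliptic stability fact gives $\mathcal E_{T_k}((\phi_{T_k}^\ast)_+^m)\to\mathcal E_\infty((\phi_\infty^\ast)_+^m)$, whence $\liminf_kJ_\e^{T_k}(\phi_{T_k}^\ast)\ge J_\e^\infty(\phi_\infty^\ast)$; since $J_\e^{T_k}(\psi)\to J_\e^\infty(\psi)$ for each fixed $\psi\in U_{\rm ad}$, minimality of $\phi_{T_k}^\ast$ forces $J_\e^\infty(\phi_\infty^\ast)\le J_\e^\infty(\psi)$, i.e.\ $\phi_\infty^\ast$ solves $(P_\infty)$. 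The monotonicity $S\ge T\Rightarrow J_\e^S(\phi_S^\ast)\le J_\e^T(\phi_T^\ast)$ then reduces to showing that $T\mapsto\mathcal E_T(\zeta)$ is nonincreasing for each fixed $\zeta$: expanding $u_\zeta$ in the Dirichlet eigenbasis $(e_j,\lambda_j)$ of $-\dv(\kappa[\zeta]\nabla\cdot)$ (using that $f,u_0$ are constant) one gets $\mathcal E_T(\zeta)=\mathcal E_\infty(\zeta)+f\sum_j\frac{1-e^{-\lambda_jT}}{\lambda_jT}\bigl(u_0-\tfrac{f}{\lambda_j}\bigr)\bigl(\int_\Omega e_j\bigr)^2$, and since $T\mapsto(1-e^{-\lambda T})/(\lambda T)$ is nonincreasing (because $(1+x)e^{-x}\le1$), the condition \eqref{eq:f-u0-assump} on the constants $f,u_0$ --- which forces $f(u_0-f/\lambda_j)\ge0$ for all $j$ and all admissible $\zeta$ --- gives the claim; passing to the infimum over $\phi$ preserves monotonicity.

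For \textbf{(iii)} (with $m=1$) the decisive idea is to use the regular optimal density $\theta^\ast\in H^1(\Omega)$ \emph{itself} as a competitor. Since $\theta^\ast\ge0$ and $|\theta^\ast|\le1$, we have $\theta^\ast\in U_{\rm ad}$ with $(\theta^\ast)_+=\theta^\ast$, while $(\phi_\e^\ast)_+\in\Theta$ gives $\mathcal E_T((\phi_\e^\ast)_+)\ge\mathcal E_T(\theta^\ast)$; minimality of $\phi_\e^\ast$ then yields
\[
\mathcal E_T(\theta^\ast)+\tfrac\e2\|\nabla\phi_\e^\ast\|_{L^2(\Omega)}^2\ \le\ J_\e^T(\phi_\e^\ast)\ \le\ J_\e^T(\theta^\ast)=\mathcal E_T(\theta^\ast)+\tfrac\e2\|\nabla\theta^\ast\|_{L^2(\Omega)}^2,
\]
so $\|\nabla\phi_\e^\ast\|_{L^2}\le\|\nabla\theta^\ast\|_{L^2}$ and $\mathcal E_T(\theta^\ast)\le\mathcal E_T((\phi_\e^\ast)_+)\le\mathcal E_T(\theta^\ast)+\tfrac\e2\|\nabla\theta^\ast\|_{L^2}^2$. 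In particular $(\phi_\e^\ast)$ is bounded in $H^1(\Omega)$ uniformly in $\e$; I extract $\phi_\e^\ast\rightharpoonup\phi_0^\ast$ weakly-$\ast$ in $L^\infty(\Omega)$, weakly in $H^1(\Omega)$, strongly in $L^2(\Omega)$ and a.e., so $\phi_0^\ast\in U_{\rm ad}$ and $\mathcal E_T((\phi_\e^\ast)_+)\to\mathcal E_T((\phi_0^\ast)_+)$ by the stability fact; letting $\e\to0$ in the two-sided estimate gives $\mathcal E_T((\phi_0^\ast)_+)=\mathcal E_T(\theta^\ast)=\min_{\theta\in\Theta}\mathcal E_T(\theta)$. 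It then remains to identify $\min_\Theta\mathcal E_T$ with $\inf_{\chi\in\mathcal{CD}}\mathcal E_T$, i.e.\ the exactness of the density relaxation: the inequality $\min_\Theta\mathcal E_T\le\inf_{\mathcal{CD}}\mathcal E_T$ is immediate, while the reverse follows because every $H$-limit at density $\theta$ is dominated in the Loewner order by the arithmetic mean $\kappa[\theta]=(\alpha+(\beta-\alpha)\theta)\mathbb I$ and, under the hypotheses that $f,u_0$ are constants with $fu_0\ge0$, the functional $\mathcal E_T$ is nonincreasing in the coefficient, so the $H$-convergence relaxed problem of (CDP) --- in the spirit of \cite{MPP08,AMP10} --- has value at least $\min_\Theta\mathcal E_T$.

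The step I expect to be the genuine obstacle is, in (ii), turning $\mathcal E_T(\zeta)\to\mathcal E_\infty(\zeta)$ into an estimate \emph{uniform in} $\zeta$ --- this is exactly what the decomposition $u_\zeta=\overline{u}_\zeta+v$ and the precise form of \eqref{eq:f-assump} are engineered for, and without it the $H^1$-compactness of $(\phi_T^\ast)$ is unavailable; and in (iii), the subtlety that $\mathcal I$ degenerates as $\e\to0$, so the \emph{only} source of $\e$-uniform $H^1$-control is the competitor $\theta^\ast$ (hence its regularity is indispensable), together with transferring the exactness of the density relaxation --- in particular the monotonicity of the time-averaged energy $\mathcal E_T$ in the coefficient --- to the parabolic setting, which is the remaining homogenization-flavoured point.
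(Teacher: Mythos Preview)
Your plan is correct and follows essentially the same architecture as the paper: the direct method for (i), the decomposition $u=\overline u+v$ together with a Galerkin/eigenfunction expansion for (ii), and the $H$-convergence relaxation combined with monotonicity of $\mathcal E_T$ in the coefficient for (iii). Two points are worth flagging as genuine, if minor, differences.

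In (ii) you argue via \emph{uniform} (in $\zeta$) convergence $\mathcal E_T(\zeta)\to\mathcal E_\infty(\zeta)$ and then pass to the limit directly, whereas the paper deduces optimality of $\phi_\infty^\ast$ by contradiction; both rest on the same $v$-equation and the same eigenfunction computation for the monotonicity, so the difference is cosmetic. Note however that your version of the monotonicity (``$T\mapsto\mathcal E_T(\zeta)$ nonincreasing for every $\zeta$'') asks \eqref{eq:f-u0-assump} to hold for the first eigenvalue of \emph{every} admissible coefficient, while the paper only needs it for the specific $\kappa[(\phi_T^\ast)_+^m]$, since it compares $J_\e^S(\phi_S^\ast)\le J_\e^S(\phi_T^\ast)$ and then shows $\mathcal E_S((\phi_T^\ast)_+^m)\le\mathcal E_T((\phi_T^\ast)_+^m)$.

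In (iii) your route is actually cleaner than the paper's. The paper obtains the $\e$-uniform $H^1$ bound on $\phi_\e^\ast$ by invoking that $a_\e=J_\e^T(\phi_\e^\ast)$ is bounded and that $J_\e^T$ is coercive; but the coercivity constant degenerates with $\e$, so that step is at best elliptical. Your sandwich, using $\theta^\ast\in U_{\rm ad}$ as competitor and $(\phi_\e^\ast)_+\in\Theta$ on the other side, yields immediately $\|\nabla\phi_\e^\ast\|_{L^2}\le\|\nabla\theta^\ast\|_{L^2}$ and $\mathcal E_T(\theta^\ast)\le\mathcal E_T((\phi_\e^\ast)_+)\le\mathcal E_T(\theta^\ast)+\tfrac{\e}{2}\|\nabla\theta^\ast\|_{L^2}^2$, which both supplies the compactness and identifies the limit value in one stroke. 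The remaining ``homogenization-flavoured'' step you isolate --- that $\mathcal E_T^{\rm hom}(\theta,A)\ge\mathcal E_T(\theta)$ because $A\le\kappa[\theta]$ and $\mathcal E_T$ is nonincreasing in the coefficient --- is exactly what the paper proves by computing the Fr\'echet derivative $\langle\mathcal E_{\rm hom}'(A),h\rangle=-\tfrac1T\int_0^T\!\!\int_\Omega h\nabla u(\cdot,t)\cdot\nabla u(\cdot,T-t)$ and showing it is $\le0$ via the Galerkin expansion and the sign condition $fu_0\ge0$; you should be aware that this nonpositivity is \emph{not} automatic in the parabolic case (unlike the elliptic self-adjoint case) and genuinely requires that computation.
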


\begin{rmk}[Interpretation of the assertions]\label{R:interpre}
\rm
As for the assertions (ii) and (iii) of Theorem \ref{T:main}, the following interpretation can be made\/{\rm :}
\begin{itemize}
\item[(i)]{\rm(}Optimality conditions{\rm )}
The assertion (ii) of Theorem \ref{T:main} can be regarded as optimality conditions for $(P)$. 
Thus, for every $T>0$, if one can construct the optimal level set function $\phi_T^\ast\in U_{\rm ad}$, then $J^T_\e(\phi_T^\ast)$ is monotonically decreasing with respect to $T>0$ and attains the minimum value for the elliptic case with $T>0$ large enough.

\item[(ii)]{\rm(}Approximate solution{\rm )}  
Furthermore, by virtue of the assertion (iii) of Theorem \ref{T:main}, 
$\phi_T^\ast\in U_{\rm ad}$ is also regarded as an approximate solution to (CDP) for $m=1$ and $\e>0$ small enough.
It should be noted that $\mathcal{E}_T(\phi_+)\le \mathcal{E}_T(\phi_+^m)$ for $m\ge 1$ by the sensitivity analysis (see \eqref{eq:positive} below) and $\phi_+\ge \phi_+^m$ for all $\phi\in U_{\rm ad}$. However, from the viewpoint of differentiability, the case $m>1$ is more tractable for numerical verification of optimality, and moreover, this monotonicity will provide a basis for the emergence of two-material domains (see \S\ref{S:numerical} below).
\end{itemize}
\end{rmk}

\subsection{
Structure of the paper
}

This paper is composed of five sections.
The next section proves (i) of Theorem \ref{T:main} by means of the direct method.
Section 3 is devoted to proving (ii) of Theorem \ref{T:main}, and in particular, time-dependent heat sources are allowed (cf.~\cite{MPP08,AMP10}). Moreover, the Galerkin method with the elliptic operator in \eqref{eq:1} is employed to demonstrate the time monotonicity of the energy, which plays a crucial role in the discussion of optimality.
Section 4 is devoted to the proof of (iii) in Theorem \ref{T:main}. Here the sensitivity of the relaxed design problem is rigorously derived, and its nonpositivity is demonstrated, which clarifies the relation between $(P)$ and (CDP) under the additional regularity assumption, thereby establishing the validity of the approximation problem. 
In particular, the Galerkin method is again employed to prove nonpositivity. However, this property is not guaranteed for the parabolic case unlike the elliptic case; therefore, although the technique is standard, establishing nonpositivity is not straightforward due to the loss of self-adjointness in the parabolic case. 
In the final section, the optimality conditions of Remark \ref{R:interpre} are verified via the nonlinear diffusion-based level set method 
(see Figures \ref{fig:result-constant_result} and \ref{fig:result-varying_result} in Section \ref{S:numerical}). Numerical results suggest that a sequence of $\phi_T^\ast$ asymptotically approaches  $\phi_\infty^\ast$ for sufficiently large $T>0$.

\section{Proof of (i) in Theorem \ref{T:main}}\label{S:pf1}
Based on the lower bounds,
$$
J_{\e}^T(\phi)\ge \frac{1}{T}\int_0^T\langle f(t),u_{\phi}(t)\rangle_{H^1_0(\Omega)}\, \d t\ge-\frac{1}{2T}\|u_0\|_{L^2(\Omega)}^2>-\infty,
$$
we first take a minimizing sequence $(\phi_n)$ in $U_{\rm ad}$. Thus it holds that
$$
\lim_{n\to+\infty} J_\e^T(\phi_n)=\inf_{\phi\in U_{\rm ad}} J_\e^T(\phi).
$$
Due to $\|\phi\|_{L^2(\Omega)}^2\le |\Omega|$ for all $\phi\in U_{\rm ad}$, we have
\begin{align*}
    -\frac{1}{2T}\|u_0\|_{L^2(\Omega)}^2+\frac{\e}{2}\|\phi\|_{H^1(\Omega)}^2-\frac{\e}{2}|\Omega|\le
     -\frac{1}{2T}\|u_0\|_{L^2(\Omega)}^2+\mathcal{I}(\phi)\le
    J_\e^T(\phi),
\end{align*}
which implies the coercivity, that is, $J_\e^T(\phi)\to +\infty$ as $\|\phi\|_{H^{1}(\Omega)}\to +\infty$.
Hence the boundedness of $(\phi_n)$ in $H^1(\Omega)\cap L^{\infty}(\Omega)$ is obtained, and therefore, 
there exists a (not relabeled) subsequence of $(n)$ and $\phi^\ast\in H^1(\Omega)\cap L^{\infty}(\Omega)$ such that
\begin{align}
\phi_n &\to \phi^\ast &&\quad\text{ weakly in }\ H^1(\Omega),\label{eq:phi-conv1}\\
\phi_n &\to \phi^\ast &&\quad\text{ weakly-$\ast$ in }\ L^{\infty}(\Omega),\\
\phi_n &\to \phi^\ast &&\quad\text{ strongly in }\ L^{q_1}(\Omega),\\
(\phi_n)_+^{q_2}&\to(\phi^\ast)_+^{q_2} &&\quad\text{ strongly in }\ L^{q_1}(\Omega)\label{eq:phi-conv4}
\end{align} 
for all $1\le q_1,q_2<+\infty$. Moreover, we see that $\phi^\ast\in U_{\rm ad}$, and in particular, for any $\Phi\in L^2(\Omega\times (0,T);\R^N)$,  
\begin{align}
\kappa[(\phi_n)_+^m]\Phi \to \kappa[(\phi^\ast)_+^m]\Phi \quad\text{ strongly in }\ L^2(\Omega\times (0,T);\R^N).
\label{eq:flux-conv}
\end{align} 

We next consider the weak limit of $u_{\phi_n}\in V$, which
is a weak solution to \eqref{eq:1}--\eqref{eq:3} with $\kappa=\kappa[(\phi_n)_+^m]$. To this end, we recall the following weak form\/{\rm :}
\begin{align*}
\langle \partial_t u_{\phi_n}(t), \psi\rangle_{H^1_0(\Omega)}+\int_\Omega \kappa[(\phi_n)_+^m](x)\nabla u_{\phi_n}(x,t)\cdot \nabla \psi(x)\, \d x=\langle f(t), \psi\rangle_{H^1_0(\Omega)} 
\end{align*}
for all $\psi\in H^1_0(\Omega)$ and a.e.~$t\in (0,T)$.
Testing it by $u_{\phi_n}\in V$, we have
\begin{align*}
\frac{1}{2}\frac{\d}{\d t}\|u_{\phi_n}(t)\|_{L^2(\Omega)}^2+\alpha\|u_{\phi_n}(t)\|_{H^1_0(\Omega)}^2
\le 
\langle f(t),u_{\phi_n}(t)\rangle_{H^1_0(\Omega)}
\le
\frac{1}{2\alpha}\|f(t)\|_{H^{-1}(\Omega)}^2
+
\frac{\alpha}{2}\|u_{\phi_n}(t)\|_{H^1_0(\Omega)}^2
\end{align*}
for a.e.~$t\in (0,T)$. Furthermore, for any $s\in(0,T]$, 
integrating it over $(0,s)$ yields 
\begin{align}
\label{eq:energyest2}
\sup_{t\in [0,T]}\|u_{\phi_n}(t)\|_{L^2(\Omega)}+\|u_{\phi_n}\|_{L^2(0,T;H^1_0(\Omega))}
\le
C(\|u_0\|_{L^2(\Omega)}+\|f\|_{L^2(0,T;H^{-1}(\Omega))}).
\end{align}
In particular, the boundedness of $(u_{\phi_n})$ in $W^{1,2}(0,T;H^{-1}(\Omega))$ is also obtained by combining that in $L^2(0,T;H^1_0(\Omega))$ with the inequality,  
\begin{align*}
\langle \partial_t u_{\phi_n}(t),\psi\rangle_{H^1_0(\Omega)}
\le 
(\beta\|u_{\phi_n}(t)\|_{H^1_0(\Omega)}
+\|f(t)\|_{H^{-1}(\Omega)})\|\psi\|_{H^1_0(\Omega)} 
\quad \text{for a.e.~$t\in (0,T)$}.
\end{align*}
Then there exists a (not relabeled) subsequence of $(n)$ and $u \in V$ such that
\begin{align}
u_{\phi_n} \to u \quad\text{ weakly in }\ V, 
\label{eq:convu}
\end{align}
which together with \eqref{eq:flux-conv} yields
\begin{align*}
\int_0^T\langle \partial_t u(t),\Psi(t)\rangle_{H^1_0(\Omega)}\,\d t
+
\int_0^T\int_\Omega \kappa[(\phi^\ast)_+^m](x)\nabla u(x,t)\cdot \nabla \Psi(x,t)\, \d x \d t
&=
\int_0^T\langle f(t), \Psi(t)\rangle_{H^1_0(\Omega)} \, \d t
\end{align*}
for all $\Psi=\psi_1(x)\psi_2(t)$, $\psi_1\in H^1_0(\Omega)$ and $\psi_2\in C^\infty_{\rm c}(0,T)$.
Thus $u\in V$ turns out to be a weak solution to \eqref{eq:1}--\eqref{eq:3} with $\kappa=\kappa[(\phi^\ast)_+^m]$.

We finally deduce from \eqref{eq:convu} and the lower semi-continuity of norm that
\begin{align*}
\inf_{\phi\in U_{\rm ad}}J_\e^T(\phi)\le 
J_\e^T(\phi^\ast)
\le
\lim_{n\to +\infty}
\mathcal{E}_T((\phi_n)_+^m)
+
\liminf_{n\to +\infty}\mathcal{I}(\phi_n)
=
\lim_{n\to+\infty}J_\e^T(\phi_n)
=
\inf_{\phi\in U_{\rm ad}}J_\e^T(\phi),
\end{align*}
which completes the proof.

\section{
Proof of (ii) in Theorem \ref{T:main}}
Let $u_T\in V$ and $\overline{u}_T\in H^1_0(\Omega)$ be weak solutions to \eqref{eq:1}--\eqref{eq:3} with $\kappa=\kappa[(\phi^\ast_T)_+^m]$ and \eqref{eq:elliptic} with $\kappa=\kappa[(\phi^\ast_T)_+^m]$, respectively.
We first show the boundedness of $(\phi_T^\ast)$ in $H^1(\Omega)$ with respect to $T>0$. 
By the definition of $\phi_T^\ast\in U_{\rm ad}$, we see that 
\begin{align}
\frac{\e}{2}\int_\Omega|\nabla\phi_T^\ast(x)|^2\, \d x
&\le
\frac{\e}{2}\int_\Omega|\nabla\phi_T^\ast(x)|^2\, \d x\nonumber\\
&\quad +
\frac{1}{T}\int_0^T\int_\Omega\kappa[(\phi_T^\ast)_+^m](x)\nabla u_T(x,t)\cdot \nabla u_T(x,t)\, \d x\d t+\frac{1}{2T}\|u_T(T)\|_{L^2(\Omega)}^2
\nonumber\\
&=
\frac{\e}{2}\int_\Omega|\nabla\phi_T^\ast(x)|^2\, \d x+
\frac{1}{T}\int_0^T\langle f(t), u_T(t)\rangle_{H^1_0(\Omega)}\d t+\frac{1}{2T}\|u_0\|_{L^2(\Omega)}^2\nonumber\\
&\le J_\e^T(\phi)+\frac{1}{2T}\|u_0\|_{L^2(\Omega)}^2
\nonumber
\end{align}
for all $\phi\in U_{\rm ad}$.
Furthermore, we deduce from \eqref{eq:energyest2} with \eqref{eq:f-assump} that
\begin{align*}
\lefteqn{\frac{1}{T}\int_0^T\langle f(t),u_\phi(t)\rangle_{H^1_0(\Omega)}\, \d t}\\
&\le 
\frac{1}{T}
\|f\|_{L^2(0,T;H^{-1}(\Omega))}\|u_\phi\|_{L^2(0,T;H^1_0(\Omega))}\\
&\le
\frac{C}{T}\|f\|_{L^2(0,T;H^{-1}(\Omega))}(\|u_0\|_{L^2(\Omega)}+\|f\|_{L^2(0,T;H^{-1}(\Omega))})
\le C\left(\frac{1}{\sqrt{T}}+1\right).
\end{align*}
Hence it holds that
$$
\frac{\e}{2}\int_\Omega|\nabla\phi_T^\ast(x)|^2\, \d x
\le
C_\e\left(\frac{1}{\sqrt{T}}+1\right)+\frac{1}{2T}\|u_0\|_{L^2(\Omega)}^2.
$$
Due to $|\phi|\le 1$ for all $\phi\in U_{\rm ad}$, the assertion holds true for $T>0$ large enough.
Hence there exist a (not relabeled) subsequence of $(T)$ and $\phi_\infty^\ast\in U_{\rm ad}$ such that
\eqref{eq:phi-conv1}--\eqref{eq:phi-conv4} with $\phi_n$ and $\phi^\ast$ being replaced by $\phi_T^\ast$ and $\phi_\infty^\ast$, respectively.

We next consider the limits of $\overline{u}_T\in H^1_0(\Omega)$ and $\mathcal{E}((\phi_T^\ast)_+^m)$. 
By noting that
$
\|\overline{u}_T\|_{H^1_0(\Omega)}\le \frac{1}{\alpha}\|f_\infty\|_{H^{-1}(\Omega)}
$,
there exist a (not relabeled) subsequence of $(T)$ and $\overline{u}_{\infty}\in H^1_0(\Omega)$ such that
\begin{align}
\overline{u}_T&\to \overline{u}_\infty\quad && \text{ weakly in }\ H^1_0(\Omega),\label{eq:Hconv-u}\\
\kappa[(\phi^\ast_T)_+^m]\nabla \overline{u}_T&\to \kappa[(\phi_\infty^\ast)_+^m]\nabla \overline{u}_\infty && \text{ weakly in }\ L^2(\Omega;\R^N), \nonumber
\end{align}
which implies that
$\overline{u}_\infty\in H^1_0(\Omega)$ becomes a weak solution to \eqref{eq:elliptic} with $\kappa=\kappa[(\phi_\infty^\ast)_+^m]$.
Then we claim that, up to a subsequence of $(T)$, 
\begin{align}
\lim_{T\to+\infty}  
\mathcal{E}_T((\phi_T^\ast)_+^m)
=
\langle f_\infty,\overline{u}_{\infty}\rangle_{H^1_0(\Omega)}.
\label{eq:conv-steady}
\end{align}
Indeed, since
$v_T:=u_T-\overline{u}_T$ is a weak solution to
$$
 \partial_t v_T -\dv[\kappa[(\phi_T^\ast)_+^m]\nabla v_T]=f-f_\infty \  \text{ in } \Omega\times (0,T),\quad v_T|_{\partial\Omega}=0,\quad v_T|_{t=0}=u_0-\overline{u}_T,
$$
we deduce from \eqref{eq:energyest2} and \eqref{eq:f-assump} that
\begin{align*}
\frac{1}{\sqrt{T}}\|v_{T}\|_{L^2(0,T;H^1_0(\Omega))}
\le
\frac{C}{\sqrt{T}}(\|u_0-\overline{u}_T\|_{L^2(\Omega)}+\|f-f_\infty\|_{L^2(0,T;H^{-1}(\Omega))})\le \frac{C}{\sqrt{T}},
\end{align*}
and in particular, for $T>0$ large enough,
\begin{align*}
\frac{1}{T}\|u_{T}\|_{L^2(0,T;H^1_0(\Omega))}
\le
\frac{C}{T}(\|u_0\|_{L^2(\Omega)}+\|f\|_{L^2(0,T;H^{-1}(\Omega))})\le \frac{C}{\sqrt{T}},
\end{align*}
which together with \eqref{eq:f-assump} yields
\begin{align}
\label{eq:conv-steady2}
\lefteqn{\left|  \mathcal{E}_T((\phi_T^\ast)_+^m)-\langle f_\infty,\overline{u}_T\rangle_{H^1_0(\Omega)}\right|}\\
 &\le
 \frac{1}{T}\left| \int_0^T\langle f(t)-f_\infty,u_T(t)\rangle_{H^1_0(\Omega)}\d t\right|
 +
 \frac{1}{T}\left| \int_0^T\langle f_{\infty},v_T(t)\rangle_{H^1_0(\Omega)}\d t\right|\displaybreak[0]
\nonumber\\
 &\le
 \frac{\|f-f_\infty\|_{L^2(0,T;H^{-1}(\Omega))}\|u_T\|_{L^2(0,T;H^1_0(\Omega))}}{T}
 +
 \frac{\|f_\infty\|_{H^{-1}(\Omega)}\|v_T\|_{L^2(0,T;H^1_0(\Omega))}}{\sqrt{T}}
\nonumber\\
&\le
C\left(
 \frac{\|f-f_\infty\|_{L^2(0,T;H^{-1}(\Omega))}}{\sqrt{T}}+\frac{1}{\sqrt{T}}
\right)
=:C_T.\nonumber
\end{align}
Combining it with \eqref{eq:Hconv-u}, we obtain \eqref{eq:conv-steady} by noting that
\begin{align*}
\left|  
\mathcal{E}_T((\phi_T^\ast)_+^m)
-
\langle f_\infty,\overline{u}_{\infty}\rangle_{H^1_0(\Omega)}
 \right|
 &\le
\left| \mathcal{E}_T((\phi_T^\ast)_+^m)
-
\langle f_\infty,\overline{u}_{T}\rangle_{H^1_0(\Omega)}\right|+
 \left| \langle f_\infty,\overline{u}_{T}-\overline{u}_{\infty}\rangle_{H^1_0(\Omega)}\right|\displaybreak[0]
\\
&\le
C_T
+
\|f_\infty\|_{L^2(\Omega)}\|\overline{u}_T-\overline{u}_\infty\|_{L^2(\Omega)}
\to 0\quad \text{ as }\ T\to+\infty.
\end{align*}
Here
note that the assumption $f_\infty\in L^2(\Omega)$ is essential in the last line. 

We next establish the optimality of $\phi_\infty^\ast\in U_{\rm  ad}$ for $(P_\infty)$. 
If $\phi_\infty^\ast\in U_{\rm  ad}$ is not a solution to $(P_\infty)$,
then there exist $\eta>0$ and another solution $\phi_\eta\in U_{\rm ad}$ such that
\begin{align}
J_\e^\infty(\phi_\eta)=J_\e^\infty(\phi_\infty^\ast)
    -\eta.
    \label{eq:cont-1}
\end{align}
Let $u_{\phi_\eta}\in V$ and $\overline{u}_{\phi_\eta}\in H^1_0(\Omega)$ be  unique weak solutions to \eqref{eq:1}--\eqref{eq:3} with $\kappa=\kappa[(\phi_\eta)_+^m]$ and \eqref{eq:elliptic} with $\kappa=\kappa[(\phi_\eta)_+^m]$, respectively. As in \eqref{eq:conv-steady2}, we have
\begin{align*}
    \lim_{T\to +\infty} \mathcal{E}_T((\phi_\eta)_+^m)
    =
    \int_\Omega f_\infty(x)\overline{u}_{\phi_\eta}(x)\,\d x,
\end{align*}
which implies that there exists $N_0\in \N$ such that, for any $S\ge N_0$, 
\begin{align}
\mathcal{E}_S((\phi_\eta)_+^m)
    <
    \int_\Omega f_\infty(x)\overline{u}_{\phi_\eta}(x)\,\d x+\frac{\eta}{2}.
    \label{eq:cont-2}
\end{align}
Furthermore, by \eqref{eq:conv-steady}, there exists $N_1\in \N$ such that, for all $S\ge \max(N_0,N_1)$, 
\begin{align}
        \int_\Omega f_\infty(x)\overline{u}_{\infty}(x)\, \d x
    <
    \mathcal{E}_S((\phi_S^\ast)_+^m)
    +\frac{\eta}{2}.  
    \label{eq:cont-3}
\end{align}
Hence we conclude that
\begin{eqnarray*}
J_\e^S(\phi_\eta)
&\stackrel{\eqref{eq:cont-2}}{<}&
\int_\Omega f_\infty(x)\overline{u}_{\phi_\eta}(x)\,\d x
+\frac{\e}{2}\int_\Omega|\nabla \phi_\eta(x)|^2\, \d x
+\frac{\eta}{2}\\
&\stackrel{\eqref{eq:cont-1}}{\le}&
\int_\Omega f_\infty(x)\overline{u}_\infty(x)\,\d x
+\frac{\e}{2}\int_\Omega|\nabla \phi^\ast_S(x)|^2\, \d x
-\frac{\eta}{2}
\stackrel{\eqref{eq:cont-3}}{<}
J_\e^S(\phi_S^\ast),
\end{eqnarray*}
which contradicts the fact that $\phi_S^\ast\in U_{\rm ad }$ is a solution to $(P)$ with $T>0$ being replaced by $S>0$. Thus $\phi_\infty^\ast\in U_{\rm  ad}$ turns out to be a solution to $(P_\infty)$.

The rest of the proof is to show the monotonicity, that is, 
$J_\e^S(\phi_S^\ast)\le J_\e^T(\phi_T^\ast)$ for $S\ge T$. 
Due to $J_\e^S(\phi_S^\ast)\le J_\e^S(\phi_T^\ast)$ by the definition of $\phi_S^\ast\in U_{\rm ad}$,  it suffices to show $\mathcal{E}_S((\phi_T^\ast)_+^m)\le\mathcal{E}_T((\phi_T^\ast)_+^m)$. To this end, Galerkin's method via the eigenvalue problem for the elliptic operator $L(\cdot):=-  \dv(\kappa\nabla\cdot)$ with $\kappa=\kappa[(\phi_T^\ast)_+^m]$ is employed below. 
Let $(\lambda_k, w_k)\in \R_{>0}\times H^1_0(\Omega)$ be the $k$-th eigen pair of $L(\cdot)$ such that $0<\lambda_1\le \lambda_2\le \ldots$ and $\lambda_k\to +\infty$ as $k\to+\infty$. Then one can construct 
$u_\ell\in W^{1,2}(0,\sigma;H^{-1}(\Omega))\cap L^2(0,\sigma;H^1_0(\Omega))$ such that
\begin{align}
u_\ell\to u_{T}\quad \text{ weakly in } W^{1,2}(0,\sigma;H^{-1}(\Omega))\cap L^2(0,\sigma;H^1_0(\Omega))
\label{eq:conv-uell}    
\end{align}
for all $\sigma\in [T,+\infty)$, and $u_\ell$ is characterized as $u_\ell(x,t)=\sum_{k=1}^\ell w_k(x)d_k^\ell(t)$, where
$d_k^\ell\in C^1(0,\sigma)$ is a solution to 
$$
\begin{cases}
(d_k^\ell)'+\lambda_k d_k^\ell=F_k\quad \text{ in } (0,\sigma),\\
d^\ell_k(0)=(u_0, w_k)_{L^2(\Omega)}
\end{cases}
$$
and $F_k(t)=\langle f(t),w_k\rangle_{H^1_0(\Omega)}$.
Thus the time independence of $f$ yields 
$$
d_k^\ell(t)
=
\left(d^\ell_k(0)+\left(\int_0^t e^{\lambda_ks}F_k(s)\, \d s\right)\right)e^{-\lambda_k t}
=
\left(d_k^\ell(0)-\frac{F_k}{\lambda_k}\right)e^{-\lambda_kt}+\frac{F_k}{\lambda_k}.
$$
In addition, if $f$ and $u_0$ satisfy
\begin{align}
    f\left(u_0-\frac{f}{\lambda_1}\right) \ge 0,
    \label{eq:f-u0-assump}
\end{align}
then we infer that
\begin{align*}
\lefteqn{\int_0^S\int_\Omega f(x,t)u_\ell(x,t)\, \d x\d t}\\
&=\sum_{k=1}^\ell
\left[ f\left(u_0-\frac{f}{\lambda_k}\right)\left( \int_\Omega w_k(x)\, \d x\right)^2\int_0^Se^{-\lambda_kt} \, \d t
+
S\int_\Omega f w_k(x)\frac{F_k}{\lambda_k}\, \d x
\right],
\end{align*}
which together with the monotonicity of $H(S):=\|e^{-\lambda_kt}\|_{L^1(0,S)}/S$ yields
\begin{align*}
\frac{1}{S}\int_0^S\int_\Omega f(x,t)u_\ell(x,t)\, \d x\d t
\le
\frac{1}{T}\int_0^T\int_\Omega f(x,t)u_\ell(x,t)\, \d x\d t.
\end{align*}
Hence we see by \eqref{eq:conv-uell} that $\mathcal{E}_S((\phi_T^\ast)_+^m)\le\mathcal{E}_T((\phi_T^\ast)_+^m)$. This completes the proof of (ii).

\section{
Proof of (iii) in Theorem \ref{T:main}}
We first note that $(\phi_\e^\ast)$ is bounded in $H^1_0(\Omega)$.
Indeed, $a_\e:=J_\e^T(\phi_\e^\ast)$ is monotonically decreasing as $\e\to 0_+$ and $a_\e\ge \inf_{\chi\in \mathcal{CD}} \mathcal{E}_T(\chi)$. 
Thus $a_\e$ has a limit. By the coercivity, that is,  $J_\e^T(\phi)\to +\infty$ as $\|\phi\|_{H^1(\Omega)}\to +\infty$, the boundedness in $H^1(\Omega)$ is obtained. 
Hence there exist a (not relabeled) subsequence of $(\e)$ and $\phi_0^\ast\in H^1(\Omega)\cap L^{\infty}(\Omega)$ such that
\eqref{eq:phi-conv1}--\eqref{eq:phi-conv4} with $\phi_n$ and $\phi^\ast$ being replaced by $\phi_\e^\ast$ and $\phi_0^\ast$, respectively. Thus $\phi_0^\ast$ lies on $U_{\rm ad}$, and in particular, $(\phi_0^\ast)_+\in\Theta$ due to $m=1$.
By the same argument mentioned in the proof of (i) and the boundedness of $(\phi_\e^\ast)$ in $H^1(\Omega)$, we have
$
\lim_{\e\to 0_+} J_\e^T(\phi_\e^\ast)=\mathcal{E}_T((\phi_0^\ast)_+).
$
Since it follows that, for any $\phi\in U_{\rm ad}$, $J_\e^T(\phi_\e^\ast)\le J_\e^T(\phi)\to \mathcal{E}_T(\phi_+)$ as $\e\to 0_+$, we obtain $\mathcal{E}_T((\phi_0^\ast)_+)\le \mathcal{E}_T(\phi_+)$, and hence,  
$$
\mathcal{E}_T((\phi_0^\ast)_+)=
\min_{\theta\in \Theta\cap H^1(\Omega)}\mathcal{E}_T(\theta).
$$

The rest is to prove that  
$$
  \inf_{\chi\in \mathcal{CD}} \mathcal{E}_T(\chi)=  \min_{\theta\in \Theta\cap H^1(\Omega)} \mathcal{E}_T(\theta).
$$
By the same argument mentioned in \cite[Theorem 2.4]{MPP08}, it holds that
\begin{align*}
\inf_{\chi\in \mathcal{CD}} \mathcal{E}_T(\chi)
=
     \min_{(\theta,A)\in \mathcal{RD}} \mathcal{E}_T^{\rm hom}(\theta,A),
\end{align*}
where $\mathcal{RD}$ is the relaxed design domain given by 
$$
\mathcal{RD}=\{(\theta,A)\in G_\theta\colon \|\theta\|_{L^1(\Omega)}=\gamma|\Omega|\},
$$
$G_{\theta}$ is the set of $(\theta, A)\in L^{\infty}(\Omega;[0,1]\times \mathbb{S}^{N\times N})$ such that 
there exists $(\chi_n,\kappa[\chi_n])\in L^{\infty}(\Omega;\{0,1\}\times \mathbb{S}^{N\times N})$ such that  
$$
\chi_n\to \theta \text{ weakly-$\ast$ in }\ L^{\infty}(\Omega),
\quad 
\|\theta\|_{L^1(\Omega)}=\gamma|\Omega|,
\quad  
\kappa[\chi_n]\overset{H}{\to } A
$$
(see \cite{MT} for the notation ``$\overset{H}{\to}$"), 
$\mathcal{E}_T^{\rm hom}$ is the homogenized energy given by
    $$
    \mathcal{E}_T^{\rm hom}(\theta,A)=\frac{1}{T}\int_0^T\langle f(t), u(t)\rangle_{H^1_0(\Omega)}\, \d t,
    $$
and $u\in V$ is a weak solution to \eqref{eq:1}--\eqref{eq:3} with $\kappa$ being replaced by $A$. 
Thus it suffices to prove that
\begin{align}
   \min_{(\theta,A)\in \mathcal{RD}} \mathcal{E}_T^{\rm hom}(\theta,A)
   =
    \min_{\theta\in \Theta } \mathcal{E}_T(\theta).
     \label{eq:key}
\end{align}
To this end, we derive the Fr\'echet derivative of 
$\mathcal{E}_{\rm hom}(A):=\mathcal{E}_T^{\rm hom}(\theta, A)$ with respect to $A\in L^{\infty}(\Omega;\mathbb{S}^{N\times N})$ below.
Define $\mathcal{L}:L^{\infty}(\Omega;\mathbb{S}^{N\times N})\times  V\times V\to\R$ by 
\begin{align*}
\mathcal{L}(A, u, v)
&=\mathcal{E}_{\rm hom}(A)
+
\int_0^T\langle \partial_t u(t), v(t)\rangle_{H^1_0(\Omega)}\, \d t\\
&\quad +
\int_0^T\int_\Omega A(x)\nabla u(x,t)\cdot \nabla v(x,t)\, \d x\d t
-
\int_0^T\langle f(t),v(t)\rangle_{H^1_0(\Omega)}\d t.
\end{align*}
Differentiating it by $u\in V$, we have
\begin{align*}
\langle \partial_{u} \mathcal{L}(A, u, v), \psi \rangle_{V}
&=
\frac{1}{T}\int_0^T\langle f(t), \psi(t)\rangle_{H^1_0(\Omega)}\, \d t
+
\int_\Omega \psi(x,T)v(x,T)\, \d x
-
\int_\Omega \psi(x,0)v(x,0)\, \d x
\\
&\quad
-
\int_0^T\langle \partial_t v(t), \psi(t)\rangle_{H^1_0(\Omega)}\, \d t
+
\int_0^T\int_\Omega A(x)\nabla v(x,t)\cdot \nabla \psi(x,t)\, \d x\d t.
\end{align*} 
Since $u_0$ is a constant, $w:=(u-u_0)/T$ turns out to be a weak solution to the following equation\/{\rm :} 
$$
\partial_t w-\dv[A \nabla w]=f/T\ \text{ in } \Omega\times (0,T),\quad w|_{\partial\Omega}=-u_0/T,\quad w|_{t=0}=0.
$$
Furthermore, $p(x,t):=w(x,T-t)$\footnote{
Additional assumptions for $u_0$ and $f$ ensure that the adjoint field $p\in V$ can be written as the state field $u\in V$ in the sense of the weak solution. In general,  the adjoint field is characterized as a weak solution to the following final value problem\/{\rm :}  
\begin{align*}
-\partial_t p-\dv[A\nabla p]=(f/T)
\ \text{ in } \Omega\times (0,T),\quad
p|_{\partial\Omega}=0,\quad
p|_{t=T}=0.
\end{align*}
From the perspective of numerical computation, the final value problem can be solved in an analogous manner to the original initial value problem. The assumption allows us to reduce the cost of this numerical computation.
}  
satisfies
$$
-\langle \partial_t p(t), \varphi\rangle_{H^1_0(\Omega)}+\int_\Omega A(x)\nabla p(x,t)\cdot \nabla \varphi(x)\, \d x=\frac{1}{T}\left\langle f(T-t),\varphi\right\rangle_{H^1_0(\Omega)}\quad \text{ for all $\varphi\in H^1_0(\Omega)$}
$$
and a.e.~$t\in (0,T)$.
If $\psi(x,0)=0$ (see Lemma \ref{L:u'}), then we see by $f(x,t)=f(x,T-t)$ that
\begin{align}
\langle \partial_{u} \mathcal{L}(A, u, -p), \psi \rangle_{V}
=0.
\label{eq:f-sym}
\end{align} 
Since $\langle \partial_v\mathcal{L}(A,u,v), \psi\rangle_V=0$ for all $v\in V$ due to the weak form of $u\in V$, we deduce from Lemma \ref{L:u'} that 
\begin{align}
\label{eq:frechet}
\lefteqn{\langle \mathcal{E}_{\rm hom}'(A),h \rangle_{L^{\infty}(\Omega)}}\\
&=
\langle \partial_A\mathcal{L}(A, u, -p), h \rangle_{L^{\infty}(\Omega)}
+
\langle \partial_{u}\mathcal{L}(A, u, -p), u'h \rangle_{V}
+
\langle \partial_{-p}\mathcal{L}(A, u, -p), -p'h \rangle_{V}\nonumber\\
&=
-\frac{1}{T}\int_0^T\int_\Omega h(x)\nabla u(x,t)\cdot \nabla u(x,T-t)\, \d x\d t
\quad 
\text{ for all $h\in L^{\infty}(\Omega;\mathbb{S}^{N\times N})$. }\nonumber
\end{align}
Furthermore, it holds that
\begin{align}
\langle \mathcal{E}_{\rm hom}'(A),h \rangle_{L^{\infty}(\Omega)}\le 0
   \label{eq:positive}
\end{align}
for all $h\in L^{\infty}(\Omega;\mathbb{S}^{N\times N})$ satisfying the uniform ellipticity. Indeed,
as in the proof of (ii),
let $(\lambda_k, w_k)$ be the $k$-th eigen pair of $L(\cdot)$ with $\kappa=A$ and let $u_\ell\in V$ be a function that appeared in the proof of (ii). 
Then we deduce from the orthogonality of $H^1_0(\Omega)$ that
\begin{align*}
\lefteqn{\int_0^T\int_\Omega h(x)
\nabla u_\ell(x,t)\cdot\nabla u_\ell(x,T-t)\, \d x\d t}\\
&=
\sum_{k=1}^\ell
\left(\int_0^T
d_k^\ell(t)d_k^\ell(T-t)\, \d t \right) 
\underbrace{
\left(\int_\Omega h(x)\nabla w_k(x)\cdot  \nabla w_k(x)\, \d x\right)}_{\ge 0\ \text{ by the uniform ellipticity}},
\end{align*}
and moreover, the nonnegativity of $d^\ell_k(t)d^\ell_k(T-t)$ is obtained by noting that
\begin{align*}
d^\ell_k(t)d^\ell_k(T-t)
&=
\left[\left(d_k^\ell(0)-\frac{F_k}{\lambda_k}\right)e^{-\lambda_kt}+\frac{F_k}{\lambda_k}\right]
\left[\left(d_k^\ell(0)-\frac{F_k}{\lambda_k}\right)e^{-\lambda_k(T-t)}+\frac{F_k}{\lambda_k}\right]
\displaybreak[0]\\
&=
(d_k^\ell(0))^2e^{-\lambda_kT}+\underbrace{d_k^\ell(0)\frac{F_k}{\lambda_k}}_{\ge 0\ \text{ by } fu_0\ge 0}\underbrace{(-2e^{-\lambda_kT}+e^{-\lambda_kt}+e^{-\lambda_k(T-t)})}_{\ge 0}\displaybreak[0]\\
&\qquad +
\frac{F_k^2}{\lambda_k^2}\underbrace{(e^{-\lambda_kT}-
e^{-\lambda_kt}-e^{-\lambda_k(T-t)}
+1)}_{\ge 0}\ge 0.
\end{align*}
We next consider the limit of $u_\ell\in V$ as $\ell\to+\infty$. Recalling that
 \begin{align}
\langle\partial_t u_\ell(t), \varphi\rangle_{H^1_0(\Omega)}+(A\nabla u_\ell(t), \nabla \varphi)_{L^2(\Omega)}=\langle f(t),\varphi\rangle_{H^1_0(\Omega)}\quad \text{ for all $\varphi\in H^1_0(\Omega)$}
\label{eq:weak-form-um}    
\end{align}
and a.e.~$t\in (0,T)$ and testing \eqref{eq:weak-form-um} by $\partial_t u_\ell\in V$, we have
$$
\|\partial_t u_\ell(t)\|_{L^2(\Omega)}^2+\frac{1}{2}\frac{\d}{\d t}\int_\Omega A(x)\nabla u_\ell(x,t)\cdot \nabla u_\ell(x,t)\, \d x=\langle f(t), \partial_t u_\ell(t)\rangle_{H^1_0(\Omega)},
$$
and hence,  
\begin{align*}
\|\partial_t u_\ell\|_{L^2(\Omega\times (0,T))}+\| u_\ell\|_{L^{\infty}(0,T;H^1_0(\Omega))}
\le C(\|f\|_{L^2(\Omega\times (0,T))}+\|\nabla u_0\|_{L^2(\Omega)}).
\end{align*}
Here we used the fact that
\begin{align}
\underline{A}\xi\cdot\xi\le A(x)\xi\cdot\xi\le \overline{A}\xi\cdot\xi
\quad \text{ for all } \xi\in \R^N \text{ and a.e.~}x\in\Omega,
\label{eq:unif-ellip-hom}
\end{align}
where $\underline{A}$ and $\overline{A}$ are the inverse of the weak-$\ast$ limit of $\kappa[\chi_n]^{-1}$ and the weak-$\ast$ limit of $\kappa[\chi_n]$, respectively (see, e.g.,~\cite[Theorem
1.3.14]{A02}).
Furthermore, we claim that
\begin{align*}
    u_\ell \to u \quad \text{ strongly in }\ L^2(0,T;H^1_0(\Omega)). 
\end{align*}
Indeed, we deduce from the weak forms of $u_\ell$ and $u$,  the boundedness of $u_\ell$ in $L^2(0,T;H^1_0(\Omega))\cap W^{1,2}(0,T;L^2(\Omega))$ and the Aubin--Lions--Simon lemma \cite{S87} that
\begin{align*}
\lim_{\ell\to+\infty}\int_0^T\int_\Omega A(x)\nabla u_\ell(x,t)\cdot \nabla u_\ell(x,t)\, \d x\d t=
\int_0^T\int_\Omega A(x)\nabla u(x,t)\cdot \nabla u(x,t)\, \d x\d t.
\end{align*}
Let $r>0$ be such that $(A-r\mathbb{I})$ satisfies the uniform ellipticity. Then 
the lower semi-continuity of norm ensures that
\begin{align*}
\lefteqn{\liminf_{\ell\to +\infty} \int_0^T\int_\Omega (A(x)-r\mathbb{I})\nabla u_\ell(x,t)\cdot \nabla u_\ell(x,t)\, \d x\d t}\\
&\ge
\int_0^T\int_\Omega (A(x)-r\mathbb{I})\nabla u(x,t)\cdot \nabla u(x,t)\, \d x\d t,
\end{align*}
which yields
$$
\limsup_{\ell\to +\infty} \int_0^T\int_\Omega |\nabla u_\ell(x,t)|^2\, \d x\d t\le
\int_0^T\int_\Omega |\nabla u(x,t)|^2\, \d x\d t.
$$
Hence the assertion holds true, and then
\eqref{eq:positive} is obtained by noting that
\begin{align*}
0
&\le
\frac{1}{T}\limsup_{\ell\to +\infty}\int_0^T\int_\Omega h(x)\nabla u_\ell(x,t)\cdot \nabla u_\ell(x,T-t)\, \d x\d t \\
&=
\frac{1}{T}
\int_0^T\int_\Omega h(x)\nabla u(x,t)\cdot \nabla u(x,T-t)\, \d x\d t.
\end{align*}
Combining \eqref{eq:positive} with \eqref{eq:unif-ellip-hom} yields 
\begin{align*}
    \mathcal{E}_T(\theta)
    \le \mathcal{E}_T^{\rm hom}(\theta,
    A),
\end{align*}
which implies \eqref{eq:key} due to $(\theta, \kappa[\theta])\in\mathcal{RD}$. This completes the proof.

\section{Confirmation for the optimality conditions}\label{S:numerical}
Before presenting the numerical results for the optimal designs, we briefly touch on our numerical algorithm via the nonlinear diffusion-based level set method, which is based on the following 
adaptive (or generalized) forward–backward splitting scheme with $\zeta(\cdot)\ge 0$\/{\rm :}
\begin{align*}
\phi_{i+1}=\psi_{i+1}-\zeta(\phi_{i})\mathcal{I}'(\phi_{i+1}),\quad 
    \psi_{i+1}=\phi_i-\zeta(\phi_i) \mathcal{E}_T'((\phi_i)_+^m).
\end{align*}
As for the Fre\'chet derivative of $\mathcal{E}_T$ at $\phi_i\in U_{\rm ad}$, we see by \eqref{eq:frechet} that
\begin{align*}
    \lefteqn{\langle \mathcal{E}_T'((\phi_i)_+^m), h\rangle_{L^{\infty}(\Omega)}}\\
    &=
    \langle 
    \partial_{(\phi_i)_+^m}\mathcal{E}_T((\phi_i)_+^m), ((\phi_i)_+^m)' h\rangle_{L^{\infty}(\Omega)}\\
    &=
    \int_\Omega \left[-\frac{m(\beta-\alpha)}{T}\int_0^T|\phi_i(x)|^{m-1}\chi_{\phi_i}(x)
    \nabla u_{\phi_i}(x,t)\cdot \nabla u_{\phi_i}(x,T-t)\, \d t\right]h(x)\, \d x.
\end{align*}
Here $\chi_{\phi_i}\in L^{\infty}(\Omega;\{0,1\})$ stands for the characteristic function with respect to $[\phi>0]:=\{x\in\Omega\colon \phi(x)>0\}$.
Setting $\zeta(\phi_i)=\tau|\phi_i|^{1-q}/q$ for $q\in (0,1)$, one can also select $m=1$ (see also \cite{LZ11} for $m>1$ and $q=1$), and then the scheme mentioned above is rewritten by
\begin{align}\label{eq:nld}
    q|\phi_i|^{q-1}\frac{\phi_{i+1}-\phi_i}{\tau}-\e\Delta \phi_{i+1}
    = 
    \frac{m(\beta-\alpha)}{T}|\phi_i|^{m-1}\chi_{\phi_i}\int_0^T
    \nabla u_{\phi_i}(\cdot,t)\cdot \nabla u_{\phi_i}(\cdot,T-t)\, \d t
\end{align}
for $\tau>0$ small enough. 
In particular, \eqref{eq:nld} is regarded as the time-discretized version of the following nonlinear diffusion equation\/{\rm :} 
\begin{align*}
\partial_t \phi^{q}-\e\Delta \phi=
 \frac{m(\beta-\alpha)}{T}|\phi_i|^{m-1}\chi_{\phi_i}\int_0^T
    \nabla u_{\phi_i}(\cdot,t)\cdot \nabla u_{\phi_i}(\cdot,T-t)\, \d t
\end{align*}
as $q\approx 1$ and $\phi_i(x)=\phi(x,\tau i)$ formally.

Now, the following algorithm is proposed\/{\rm :}
\begin{algorithm}[H]
    \caption{Optimization for the level set function.}
    \label{alg}
    \begin{algorithmic}[1]
    \STATE 
    Let $i=0$.
    Set $\Omega\subset \R^N$, $\alpha,\beta, \gamma>0$, $f\in L^2(0,T;H^{-1}(\Omega;\R_+))$, $u_0\in L^2(\Omega;\R_+)$ and $\phi_0\in U_{\rm ad}$. 
    \STATE
    Solve \eqref{eq:1}--\eqref{eq:3} with  $\kappa=\kappa[(\phi_i)_+^m]$ to determine $u_{\phi_{i}}$ in \eqref{eq:nld}. 
    \STATE
    Compute $\phi_{i+1}$ from \eqref{eq:nld}.  
  \STATE
Determine $\lambda\in\R$ such that 
$
|\gamma|\Omega|-\|(\phi_{i+1}^\lambda)_+\|_{L^1(\Omega)}|\le \eta_1. 
$
Here
$
\phi^\lambda_{i+1}=\max\{-1,\min\{\phi_{i+1}+\lambda,1\}\}.
$
\STATE
   Check for the convergence condition 
   $
   \|\phi^\lambda_{i+1}-\phi_{i}\|_{L^1(\Omega)}\le \eta_2.
   $ 
   If it is satisfied, then terminate the optimization as $\phi_{i+1} \leftarrow \phi^\lambda_{i+1}$; otherwise, return 2 after setting $\phi_{i} \leftarrow \phi^\lambda_{i+1}$. 
    \end{algorithmic}
\end{algorithm}
Based on Algorithm \ref{alg}, we construct candidate solutions to $(P)$ numerically. 
Here we use a finite element method implemented on FreeFEM++ \cite{H12}. Throughout this section, we set $N=2$, $\Omega=(0,1)^2$, $u_0 \equiv 1$, $\alpha=1$, $\beta=10$, $\varepsilon=10^{-4}$ and $\phi_0\equiv\gamma=0.5$. The parameter $m$ is set to $m=3$ for the differentiability of $\mathcal E_T(\phi_+^m)$. In the finite-element analysis for \eqref{eq:1}--\eqref{eq:3}, we adopt the standard backward Euler scheme and piecewise linear ($\mathbb P_1$) elements over a triangular mesh. The convergence condition is set to $\eta_2 = 8\times 10^{-5}$.

Let us first consider the simplest case, where the source $f$ is constant in space and time with $f\equiv 10$. The proposed algorithm (Algorithm 1) is performed for the optimal design problems ($P_\infty$) (elliptic case) and parabolic case $(P)$ with $T=0.1$, $1$, and $5$. Figure \ref{fig:result-constant_result} shows the optimized distribution of $\phi$ and convergence history of the objective functionals $ J_\varepsilon^T(\phi)$ and $ J_\varepsilon^\infty(\phi)$. The results suggest that the optimal values approach that of $(P_\infty)$ monotonically as $T$ increases, which is consistent with the consequence of Theorem \ref{T:main} (ii). Indeed, we computed the value of $\lambda_1$, the smallest eigenvalue of $-\mathrm{div} (\kappa[(\phi_T^\ast)_+^m])$, and confirmed that \eqref{eq:f-u0-assump} is fulfilled. This ensures that the optimized configurations satisfy a necessary optimality condition in Remark \ref{R:interpre} (i).

\begin{figure}
    \centering
    \includegraphics[width=1\linewidth]{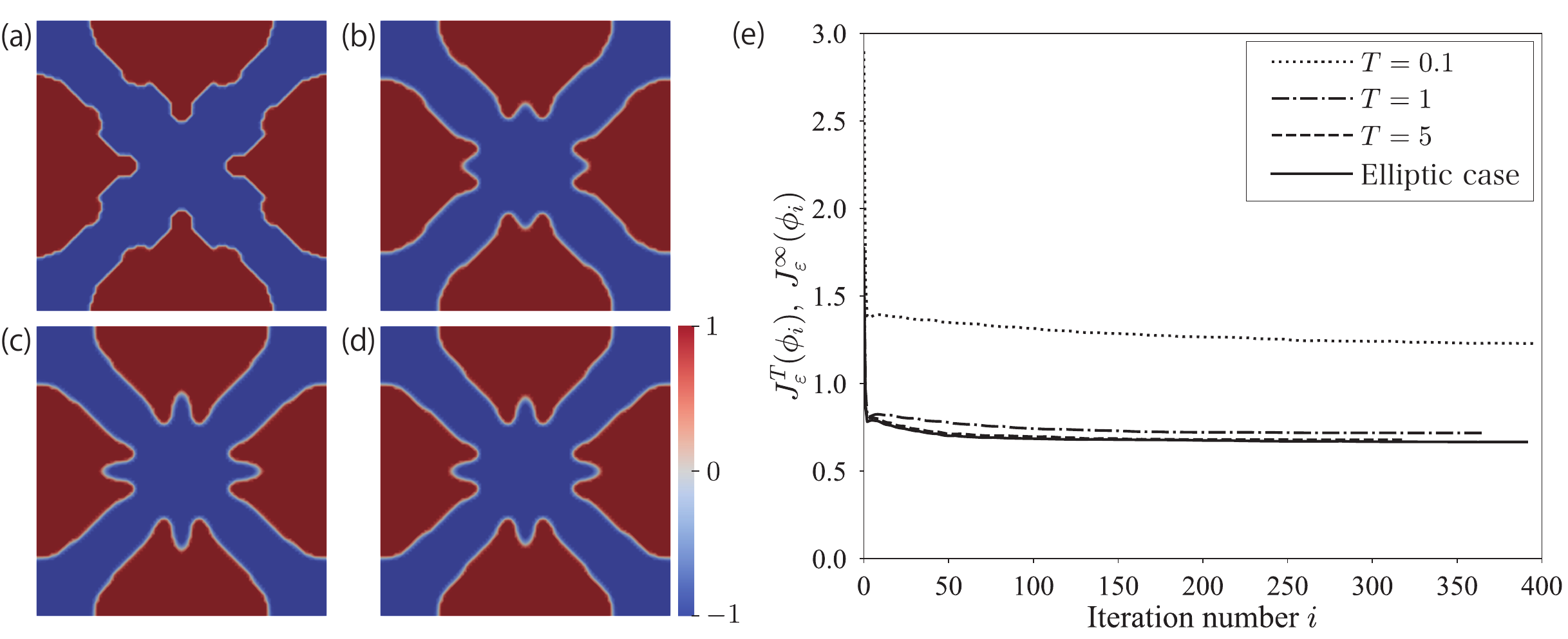}
    \caption{Optimized designs and convergence history of the objective functionals in the case of $f\equiv 10$. (a)--(d) show optimized level set functions $\phi$: (a) $T=0.1$ (b) $T=1$ (c) $T=5$ (d) elliptic case. The convergence histories of $J^T_\varepsilon(\phi)$ and {$J^\infty_\varepsilon(\phi)$} are plotted in (e).
    }
    \label{fig:result-constant_result}
\end{figure}

\begin{figure}
    \centering
    \includegraphics[width=1\linewidth]{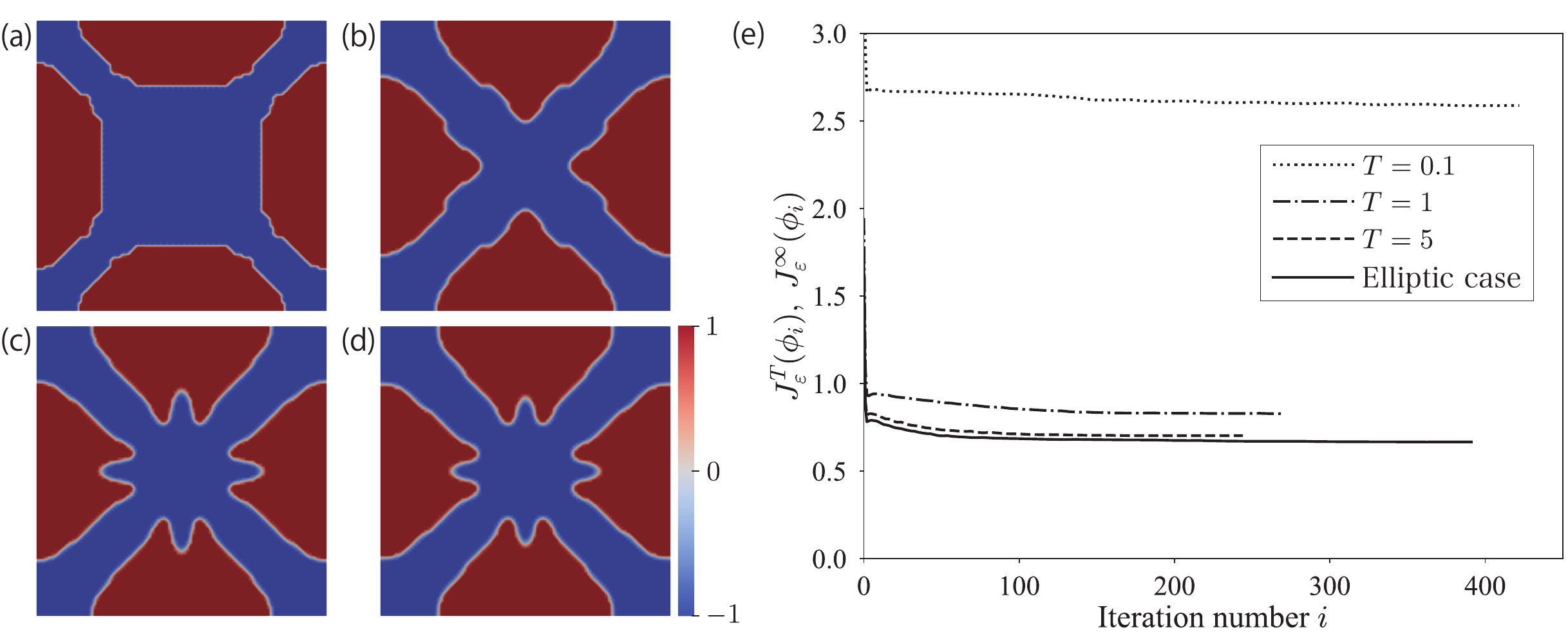}
    \caption{Optimized designs and convergence history of the objective functionals when the source $f$ is given by \eqref{eq:f-example}. (a)--(d) show optimized level set functions $\phi$: (a) $T=0.1$ (b) $T=1$ (c) $T=5$ (d) elliptic case. The convergence histories of $J_\varepsilon^T(\phi)$ and $J_\varepsilon^\infty(\phi)$ are plotted in (e).}
    \label{fig:result-varying_result} 
\end{figure}
We observed analogous numerical results for non-constant source $f$. 
As in the condition on $f$ for obtaining \eqref{eq:f-sym},
let us next consider the following function:
\begin{align}
    f(x,t) =
    \begin{cases}
        10(\exp(-\pi^2 t/2)\cos 4\pi^2 t+1), & t \leq T/2, \ x\in\Omega,
        \\
        f(x,T-t), & t > T/2, \ x\in\Omega.
    \end{cases}
    \label{eq:f-example}
\end{align}
It is easy to verify that \eqref{eq:f-example} satisfies the assumption of Theorem \ref{T:main} (ii). The same algorithm and parameters produce the results shown in Figure \ref{fig:result-varying_result}. These results suggest the convergence of the optimal values is still valid for time-dependent sources. It is worth noticing that not only the objective values but also the optimized level set functions $\phi_T^\ast$ asymptotically approach $\phi_\infty^\ast$, a minimizer of $(P_\infty)$, for large $T$.

\appendix
\section{Differentiability of the state variable}
\begin{lem}\label{L:u'}
For every $(\theta,A)\in \mathcal{RD}$, let $u\in V$ be a unique weak solution to \eqref{eq:1}--\eqref{eq:3} with $\kappa[\chi]$ being replaced by $A$. Then $A\mapsto u=u_A$ is Fr\'echet differentiable at $A$
for the direction $h\in L^{\infty}(\Omega;\mathbb{S}^{N\times N})$ such that $A+h$ satisfies the uniform ellipticity.
Furthermore,
$\tilde{u}:=u'h$ is characterized as a unique weak solution to the following \eqref{eq:u'1}--\eqref{eq:u'3}\/{\rm :}
\begin{align}
\partial_t \tilde{u}&=\dv(A\nabla \tilde{u}+h\nabla u)\quad &&\text{ in }\ \Omega\times (0,T), \label{eq:u'1}\\
\tilde{u}&=0 &&\text{ on }\ \partial \Omega\times (0,T),\label{eq:u'2}\\
\tilde{u}&=0 &&\text{ in }\ \Omega\times \{0\}.
\label{eq:u'3}
\end{align}
\end{lem}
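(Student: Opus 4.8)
The plan is to prove differentiability by a direct perturbation argument, exploiting the linearity of \eqref{eq:1} in $u$ and the affine dependence of the coefficient $A\mapsto A+h$. First I would restrict to $h\in L^\infty(\Omega;\mathbb{S}^{N\times N})$ with $\|h\|_{L^\infty(\Omega;\mathbb{S}^{N\times N})}$ small enough that $A+h$ is uniformly elliptic with constants close to those of $A$; then there is a unique weak solution $u_{A+h}\in V$ to \eqref{eq:1}--\eqref{eq:3} with $A$ replaced by $A+h$, and the energy estimate \eqref{eq:energyest2} bounds $\|u_{A+h}\|_V$ uniformly in such $h$. In parallel, since $u=u_A\in L^2(0,T;H^1_0(\Omega))$, the source term $\dv(h\nabla u)$ belongs to $L^2(0,T;H^{-1}(\Omega))$, so the linear problem \eqref{eq:u'1}--\eqref{eq:u'3} admits a unique weak solution $\tilde{u}\in V$ by standard parabolic theory; the map $h\mapsto\tilde{u}$ is linear, and the energy estimate shows it is bounded from $L^\infty(\Omega;\mathbb{S}^{N\times N})$ into $V$, which is the candidate for $u'$.

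Next I would establish the Lipschitz bound $\|u_{A+h}-u_A\|_V\le C\|h\|_{L^\infty(\Omega;\mathbb{S}^{N\times N})}$. Subtracting the two weak formulations shows that $z_h:=u_{A+h}-u_A$ is the weak solution of
\[
\partial_t z_h-\dv(A\nabla z_h)=\dv(h\nabla u_{A+h})\ \text{ in }\Omega\times(0,T),\quad z_h|_{\partial\Omega}=0,\quad z_h|_{t=0}=0,
\]
and testing by $z_h$, together with the uniform bound on $\|\nabla u_{A+h}\|_{L^2(\Omega\times(0,T))}$, yields the estimate in $L^\infty(0,T;L^2(\Omega))\cap L^2(0,T;H^1_0(\Omega))$; the $W^{1,2}(0,T;H^{-1}(\Omega))$ part then follows from the equation exactly as in Section \ref{S:pf1}.

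Finally, set $r_h:=u_{A+h}-u_A-\tilde{u}$. A short computation shows that $r_h$ is the weak solution of
\[
\partial_t r_h-\dv(A\nabla r_h)=\dv\bigl(h\nabla(u_{A+h}-u_A)\bigr)\ \text{ in }\Omega\times(0,T),\quad r_h|_{\partial\Omega}=0,\quad r_h|_{t=0}=0,
\]
and the same energy argument gives $\|r_h\|_V\le C\|h\|_{L^\infty(\Omega;\mathbb{S}^{N\times N})}\|u_{A+h}-u_A\|_{L^2(0,T;H^1_0(\Omega))}\le C\|h\|_{L^\infty(\Omega;\mathbb{S}^{N\times N})}^2$ by the previous step. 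Hence $\|r_h\|_V=o(\|h\|_{L^\infty(\Omega;\mathbb{S}^{N\times N})})$, which is precisely Fréchet differentiability at $A$ with $u'h=\tilde{u}$; the characterization for an arbitrary admissible direction then follows from the linearity of $u'$. The main obstacle is not conceptual but lies in the careful handling of the $H^{-1}$-valued source terms in the parabolic energy estimates and in the passage from the linear (Lipschitz) bound to the quadratic remainder bound: the decisive point is that the right-hand side of the $r_h$-equation contains $\nabla(u_{A+h}-u_A)$ rather than $\nabla u_A$, so the Lipschitz estimate upgrades the linear bound to a quadratic one. One must also keep track of the requirement that $A+h$ remain uniformly elliptic, which is exactly the hypothesis imposed on the direction $h$.
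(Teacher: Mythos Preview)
Your proposal is correct and follows essentially the same route as the paper: both arguments identify the linearized equation \eqref{eq:u'1}--\eqref{eq:u'3} for $\tilde u$, derive the equation satisfied by the remainder $r_h=u_{A+h}-u_A-\tilde u$ with right-hand side $\dv(h\nabla(u_{A+h}-u_A))$, and combine the energy estimate with the Lipschitz bound $\|u_{A+h}-u_A\|_{L^2(0,T;H^1_0(\Omega))}\le C\|h\|_{L^\infty}$ to obtain the quadratic remainder estimate in $V$. The only cosmetic difference is that the paper first introduces the one-parameter family $A[s]=A+sh$ and formally differentiates in $s$ to motivate the linearized problem, whereas you write it down directly.
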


\begin{proof}
Let $A[s]=A+sh$ for $s\in [0,1]$ and let $u[s]$ be a weak solution to \eqref{eq:1}--\eqref{eq:3} with $\kappa[\chi]$ being replaced by $A[s]$.
Differentiating the weak form of $u[s]$ by $s\in [0,1]$, we have, for any $\psi\in H:=L^2(0,T;H^1_0(\Omega))$, 
\begin{align*}
\int_0^T\langle \partial_t u'[s](t),\psi(t)\rangle_{H^1_0(\Omega)}\, \d t
=-
\int_0^T\int_{\Omega}[A[s](x)\nabla u'[s](x,t)+h(x)\nabla u[s](x,t)]\cdot \nabla \psi(x,t)\, \d x\d t,
\end{align*}
which coincides with the weak form of \eqref{eq:u'1}--\eqref{eq:u'3} as $\tilde{u}=u'[s]$ and $s=0$.
Noting that $A=A[0]=A[1]-h$, $u_{A+h}=u[1]$ and $u_{A}=u[0]$, we observe that, for any $\psi\in H$,
\begin{align}
 \lefteqn{  
 \int_0^T\langle \partial_t[u_{A+h}(t)-u_A(t)-\tilde{u}(t)], \psi(t)\rangle_{H^1_0(\Omega)}\, \d t}\nonumber\\
 &\quad +
 \int_0^T\int_{\Omega}A(x)\nabla[u_{A+h}(x,t)-u_A(x,t)-\tilde{u}(x,t)]\cdot \nabla \psi(x,t)\, \d x\d t
 \displaybreak[0]\nonumber\\
&=
 \int_0^T\langle \partial_t[u[1](t)-u[0](t)-\tilde{u}(t)], \psi(t)\rangle_{H^1_0(\Omega)}\, \d t\nonumber\\
 &\quad +
\int_0^T\int_{\Omega}[(A[1](x)-h(x))\nabla u[1](x,t)-A[0](x)\nabla u[0](x,t)-A(x)\nabla \tilde{u}(x,t)]\cdot \nabla \psi(x,t)\, \d x \d t
\displaybreak[0]\nonumber\\
& =
    -
   \int_0^T\int_{\Omega}h(x)\nabla (u[1](x,t)-u[0](x,t))\cdot \nabla \psi(x,t)\, \d x\d t
\le
    \|h\|_{L^{\infty}(\Omega)}\|u_{A+h}-u_A\|_{H}\|\psi\|_{H}.
 \nonumber
\end{align}
Here we claim that
\begin{align*}
\|u_{A+h}-u_A\|_{H}
\le C\|h\|_{L^{\infty}(\Omega)}.
\end{align*}
Indeed, by the same argument mentioned above, one can obtain
\begin{align*}
\lefteqn{\|u_{A+h}(T)-u_A(T)\|_{L^2(\Omega)}^2+\|
u_{A+h}-u_A\|_{H}^2}\\
&\le
    C\|h\|_{L^{\infty}(\Omega)}\|u_{A+h}\|_{H}\|u_{A+h}-u_A\|_{H}\\
&\le  C\|h\|_{L^{\infty}(\Omega)}(\|f\|_{L^2(0,T;H^{-1}(\Omega))}+\|u_0\|_{L^2(\Omega)})\|u_{A+h}-u_A\|_{H}.
\end{align*}
Here we used the fact that $A+h$ satisfies the uniform ellipticity to obtain $\|u_{A+h}\|_{H}\le C(\|f\|_{L^2(0,T;H^{-1}(\Omega))}+\|u_0\|_{L^2(\Omega)})$ in the last line.
Hence setting $\psi=u_{A+h}-u_A-\tilde{u}$ yields 
\begin{align*}
\|u_{A+h}(T)-u_A(T)-\tilde{u}(T)\|_{L^2(\Omega)}^2+
    \|u_{A+h}-u_A-\tilde{u}\|_{H}^2\le C\|h\|_{L^{\infty}(\Omega)}^2\|u_{A+h}-u_A-\tilde{u}\|_{H},
\end{align*}
which implies that
\begin{align}
    \lim_{\|h\|_{L^{\infty}(\Omega)}\to 0_+}\frac{\|u_{A+h}-u_A-\tilde{u}\|_{H}}{\|h\|_{L^{\infty}(\Omega)}}=0.
    \label{eq:A-1}
\end{align}
Furthermore, we deduce from the same argument that
$$
\|\partial_t(u_{A+h}(t)-u_A(t)-\tilde{u}(t))\|_{H^{-1}(\Omega)}
\le
\|h\|_{L^\infty(\Omega)}\|u_{A+h}(t)-u_A(t)\|_{H^1_0(\Omega)}.
$$
Hence it holds that
$$
\|\partial_t(u_{A+h}-u_A-\tilde{u})\|_{H^\ast}
\le
C
\|h\|_{L^\infty(\Omega)}^2,
$$
which together with \eqref{eq:A-1} yields 
$$
\lim_{\|h\|_{L^{\infty}(\Omega)}\to 0_+}\frac{\|u_{A+h}-u_A-\tilde{u}\|_{V}}{\|h\|_{L^{\infty}(\Omega)}}=0.
$$
This completes the proof.

\end{proof}

\section*{Declaration of competing interest}
The authors declare that they have no known competing financial interests or personal relationships that could have appeared to influence the work reported in this paper.

\section*{Data availability}
Data will be made available on request.

\end{document}